\documentclass[reqno,12pt]{amsart}

\usepackage{graphicx}
\usepackage{amssymb}
\usepackage{amsmath}
\usepackage[hidelinks]{hyperref}
\usepackage{microtype}
\usepackage[nameinlink]{cleveref}

\date{}

\theoremstyle{plain}
\newtheorem{theorem}{Theorem}

\newtheorem{proposition}[theorem]{Proposition}

\newtheorem{rem}{Remark}

\theoremstyle{definition}

\theoremstyle{remark}

\def\N{{\mathbb N}}

\DeclareMathOperator{\lk}{lk}

\title{\'Etude on the Delta-unknotting number}

\author{Sebastian Baader and Lukas Lewark}

\hypersetup{pdfauthor={\authors},pdftitle={\shorttitle}}

\begin{document}

\begin{abstract} We derive an almost sharp lower bound on the Casson invariant of positive braid knots. As a consequence, we show that the $\Delta$\nobreakdash-unknotting number is quasi-additive on the set of positive and negative braids on a fixed number of strands. 
\end{abstract}

%\dedicatory{}

\maketitle

\section{Introduction}

The recent discovery of the non-additivity of the unknotting number $u(K)$ of knots by Brittenham and Hermiller~\cite{BH} leaves us with many interesting questions, thought to be superseded by the (hypothetical) additivity so far. Is the unknotting number quasi-additive, similar to the minimal crossing number~\cite{L}, i.e.~does there exist a constant $c>0$ with $u(K_1 \# K_2) \geq c(u(K_1)+u(K_2))$ for all knots $K_1,K_2$? Does $u(K^n) \geq n$ hold for all knots $K$ and for all $n \in \N$?
What are natural families of knots on which the unknotting number is additive?
In this paper we answer none of these questions. Instead, we consider another unknotting operation for knots, called $\Delta$-move or clasp-pass move, shown in \Cref{delta}.

\begin{figure}[htb]
\includegraphics[scale=1.0]{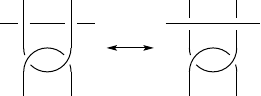}
\caption{$\Delta$-move}
\label{delta}
\end{figure}

The fact that the $\Delta$-move is an unknotting operation for knots was first proved in~\cite{MN}, and can be seen as a special case of Habiro's famous theorem on Vassiliev invariants~\cite{H}. The corresponding unknotting number, denoted by $u_\Delta(K)$, admits a powerful lower bound by the (absolute value of the) Casson invariant $a_2(K)$, i.e.~by the second coefficient of the Conway polynomial of knots, written as
$$\nabla_K(z)=1+a_2(K)z^2+a_4(K)z^4+\ldots+a_{2n}(K)z^{2n}.$$

A remarkable manifestation of the strength of the bound $|a_2(K)| \leq u_\Delta(K)$, proved by Okada~\cite{O}, is the fact that it implies the additivity of $u_\Delta$ on all knots with up to seven crossings.
This makes it potentially hard to come up with an example disproving the additivity of $u_\Delta$. As in the case of the unknotting number, the quasi-additivity of the $\Delta$\nobreakdash-unknotting number remains open. It is under the light of this that our main result is to be savoured.

Let $B_m^+$ be the set of positive braids on $m$ strands, i.e.~the set of finite products of the positive standard generators $\sigma_1,\ldots,\sigma_{m-1} \in B_m$, and similarly let $B_m^-$ be the set of negative braids on $m$ strands. We denote by $\widehat{B_m^+}$ resp.~$\widehat{B_m^-}$ the sets of links that are obtained as closures of braids in $B_m^+$ resp.~$B_m^-$.

\begin{theorem}
For all knots $K_1,K_2 \in \widehat{B_m^+} \cup \widehat{B_m^-}$:
$$u_\Delta(K_1 \# K_2) \geq \frac{1}{48m} \left(u_\Delta(K_1)+u_\Delta(K_2)\right).$$
\label{quasi}
\end{theorem}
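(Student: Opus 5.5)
We sandwich both sides between multiples of the Casson invariant, which is additive under connected sum: $a_2(K_1\# K_2)=a_2(K_1)+a_2(K_2)$, since the Conway polynomial is multiplicative. Okada's bound $|a_2(K)|\le u_\Delta(K)$ gives the lower side. We also need an upper bound of the form $u_\Delta(K)\le C\,|a_2(K)|$ for knots $K\in\widehat{B_m^+}\cup\widehat{B_m^-}$, with $C=48m$.

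Granting this, the argument runs as follows. Mirroring changes neither $u_\Delta$ nor $|a_2|$ and exchanges positive and negative braids. The key point is that $a_2$ has a fixed sign on each class: I expect $a_2(K)\ge 0$ for positive braid knots and, by mirroring, for negative ones too (mirroring preserves $a_2$, because $\nabla_{\overline K}(z)=\nabla_K(z)$ for knots). Hence no cancellation can occur, and
$$u_\Delta(K_1\#K_2)\ \ge\ a_2(K_1)+a_2(K_2)\ \ge\ \tfrac{1}{48m}\bigl(u_\Delta(K_1)+u_\Delta(K_2)\bigr).$$
The actual content therefore lies in the two facts about positive braid knots: nonnegativity of $a_2$, which is classical for positive links, and the comparison $u_\Delta\le 48m\,a_2$.

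For the upper bound on $u_\Delta$, take a positive braid word $\beta$ on $m$ strands of length $n$. Its closure is a knot of genus $g=(n-m+1)/2$. I would bound $u_\Delta(K)$ linearly in $n$ by an explicit unknotting procedure:
\begin{itemize}
\item a crossing change can be realized by a bounded number of $\Delta$-moves only in special configurations, so instead use that the closure is unknotted after changing about half of the crossings;
\item better, use a direct bound $u_\Delta(K)\le c\cdot g(K)$ or $u_\Delta\le c\, n$ obtained by cancelling pairs $\sigma_i^2$ via $\Delta$-moves up to bounded cost.
\end{itemize}
Something like $u_\Delta\le n$ up to constants should be attainable, for instance by reducing each full twist $\sigma_i^2$ using clasp-pass moves.

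The main obstacle, and the heart of the paper's abstract claim, is the lower bound $a_2(K)\ge c\, n/m$ for positive braid knots, with $c$ of order $1/48$. The term $1/m$ is necessary, since torus-type knots such as $T(2,k)$ have $a_2$ quadratic in the crossing number, but the worst case is sparse braids. My approach would be induction via the skein relation $a_2(K_+)-a_2(K_-)=\lk(L_0)$. Deleting a crossing $\sigma_i$ that appears at least twice (so that the resulting two-component link has positive linking number, computed as half the crossings between its components, which is at least $1$) shows that $a_2$ increases by at least $1$ each time we add a pair of generators. Summing along a sequence of reductions from $\beta$ down to a minimal braid word (each generator appearing once, giving the unknot) would then yield $a_2\ge$ (number of removable crossing pairs) minus a correction. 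I would organize this by grouping the letters of the word into blocks of length about $m$, and argue that within each block at least one crossing contributes linking number $\ge 1$. This would give $a_2\gtrsim n/(c'm)$. Combined with $u_\Delta\le n/2$ or similar, this yields the constant $48m$.
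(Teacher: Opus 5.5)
Your outer frame matches the paper's. Okada's bound, additivity and mirror-invariance of $a_2$, and nonnegativity of $a_2$ on positive braid knots reduce everything to an estimate $u_\Delta(K)\le 48m\,a_2(K)$. The gap is in how you reach that estimate: the linear upper bound $u_\Delta(K)\le c\,n$ (or $c\,g(K)$, or $n/2$) you propose is false, and Okada's inequality itself refutes it. The torus knot $T(2,2k+1)$ is the closure of $\sigma_1^{2k+1}\in B_2^+$, so $n=2k+1$ and $g=k$, yet $u_\Delta\ge a_2=k(k+1)/2$. Already $T(2,5)$ has $u_\Delta\ge 3>n/2$, and no bound $u_\Delta\le f(m)\,n$ can hold even for $m=2$. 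For the same reason, a factor $\sigma_i^2$ cannot be removed by a bounded number of $\Delta$-moves. Each $\Delta$-move changes $a_2$ by exactly $\pm1$, whereas passing from $\alpha\sigma_i^2$ to $\alpha$ changes $a_2$ by $\lk(\widehat{\alpha\sigma_i})$, which is unbounded (it equals $k$ for $\sigma_1^{2k+1}\to\sigma_1^{2k-1}$). Your own remark that $a_2(T(2,k))$ grows quadratically already contradicts the linear bound.

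What one can actually prove is a quadratic bound. A crossing change in an $n$-crossing diagram costs at most $n$ $\Delta$-moves \cite{NNU}, so $u_\Delta\le n^2/2$. For a reduced positive braid ($n\ge 2(m-1)$), Bennequin's formula gives $n\le 4g$, hence $u_\Delta\le 8g^2$. To compete with this you need a lower bound on $a_2$ that is quadratic in $g$, and your pair-removal skein argument only gives linear growth. Each step contributes $\lk\ge 1$, so you get $a_2\ge g$, with no $1/m$ and no blocks needed. Nothing better is possible uniformly: $a_2(3_1^n)=n=g$ for the closure of $\sigma_1^3\cdots\sigma_n^3\in B_{n+1}^+$. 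This family, not $T(2,k)$, is why a factor $1/m$ must appear.

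The missing ingredient is \Cref{casson}, $a_2(K)\ge g^2/(6m)$, proved by induction on $g$ for all $m$ simultaneously. If $\lk(L)\ge g/(3m)$, the skein relation plus the induction hypothesis suffice. Otherwise, fewer than $g/(3m)$ crossing changes split $L$ and turn $K$ into $K_1\#K_2$. This is a positive braid knot on $m-1$ strands of genus at least $g-g/(3m)$, and the loss of $a_2$ along those crossing changes is at most $g^2/(18m^2)$. Combining $u_\Delta\le 8g^2$ with $a_2\ge g^2/(6m)$ is what produces the constant $48m$.
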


In simple words, the $\Delta$\nobreakdash-unknotting number is quasi-additive on the family of knots in $\widehat{B_m^+} \cup \widehat{B_m^-}$. A stronger statement is true in the special case $m=3$. Indeed, for knots $K$ that are closures of positive $3$-braids, the $\Delta$\nobreakdash-unknotting number $u_\Delta(K)$ coincides with the Casson invariant~$a_2(K)$~\cite{NNU}. This implies the  additivity of $u_\Delta$ on $\widehat{B_3^+} \cup \widehat{B_3^-}$, since the Casson invariant is additive and invariant under the mirror image of knots.

We will derive \Cref{quasi} from the following proposition strengthening a special case of a result by Van Buskirk, which provides positive lower and upper bounds on all coefficients of the Conway polynomial of positive braid links~\cite{VB}. The second inequality in the statement below is in fact taken from that reference. We denote the minimal Seifert genus of a knot $K$ by $g(K)$.

\begin{proposition}
Let $K$ be a knot represented as the closure of a positive $m$-braid, and set $g=g(K)$. Then the Casson invariant $a_2(K)$ satisfies
$$\frac{g^2}{6m} \leq a_2(K) \leq \frac{g(g+1)}{2}.$$
\label{casson}
\end{proposition}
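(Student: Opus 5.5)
We would prove only the lower bound, since the upper bound $a_2(K)\le g(g+1)/2$ is the special case of Van Buskirk's estimates~\cite{VB} quoted above. The plan is an induction on the length $n$ of a positive braid word $w\in B_m^+$ whose closure is the knot $K$. For such a word the Seifert algorithm gives a minimal genus surface, so $2g=n-m+1$. The main tool is the skein relation for the Conway polynomial. Changing a positive crossing $c$ of a diagram of $K$ into a negative one gives a knot $K'$, and smoothing $c$ gives a two-component link $L_0$. These satisfy
\begin{equation*}
a_2(K)=a_2(K')+\lk(L_0).
\end{equation*}
In a positive diagram every crossing between the two components of $L_0$ is positive, so $\lk(L_0)$ is half the number of such crossings. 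In particular it is nonnegative and can be bounded below by counting.

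Step one is to keep the induction inside positive braids. We change a crossing that belongs to a square $\sigma_i^2$ in $w$, after conjugation and braid relations if necessary. Then $K'$ is the closure of the word obtained by deleting $\sigma_i^2$, which is again a positive braid knot, now of genus $g-1$. The smoothed link $L_0$ is the closure of $w$ with one $\sigma_i$ deleted. If no square is available after such moves, we use the Garside-type fact that a positive braid word of length $n$ on $m$ strands contains some $\sigma_i$ at least $n/(m-1)$ times. We then pick two occurrences of $\sigma_i$ with no $\sigma_i$ between them. The letters between them commute past one of the two occurrences, except for the letters $\sigma_{i\pm1}$, and these can be handled by a slide. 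Alternatively, we would replace the square deletion by deleting these two occurrences, changing one crossing and then performing a positive Markov-type simplification.

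Step two is the key estimate: at each stage we want a crossing for which $\lk(L_0)\ge (g-1)/(3m)$, or something comparable. We choose the generator $\sigma_i$ occurring most often, say $k\ge n/(m-1)$ times. Smoothing one occurrence splits the closure into two components. Among the occurrences of $\sigma_i$, we pick one that splits the remaining occurrences roughly evenly between inter-component and intra-component crossings; this uses the cyclic order of the occurrences along the knot. This should give of order $k/3$ crossings between the two components, hence $\lk(L_0)\gtrsim n/(6m)\approx g/(3m)$. Summing the per-step gains over the genus reduction $g\to g-1\to\dots\to 0$ then yields $a_2(K)\ge\sum_{j\le g} j/(3m)\ge g^2/(6m)$.

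The main obstacle is step two, namely guaranteeing a crossing with large linking number for the smoothing while simultaneously keeping the reduced word a positive braid knot. If the balancing argument proves too delicate, we would fall back on the Polyak--Viro Gauss diagram formula for $a_2$. For a positive diagram this formula counts pairs of crossings in a certain interleaving pattern with all signs $+1$, so a purely combinatorial lower bound on such interleaved pairs among the $\sigma_i$-occurrences would suffice directly.
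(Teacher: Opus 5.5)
Your Step~1 is fine. The pigeonhole/sliding fallback is not needed, since Rudolph's result always gives a positive word $\alpha\sigma_i^2$ for a nontrivial positive braid knot.

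\textbf{The gap.} Step~2 carries the whole estimate, and it is only asserted. The balancing heuristic has no basis. Smoothing an occurrence $c$ of $\sigma_i$ turns another occurrence $d$ into an inter-component crossing exactly when the chords $c,d$ interleave in the Gauss diagram of $K$. That is governed by how the strands connect through the rest of the braid, not by the cyclic order of the $\sigma_i$-letters.
\begin{itemize}
\item For example, take $K=\widehat{\alpha\sigma_j^2}$ where the two components $A,B$ of $\widehat{\alpha\sigma_j}$ cross only a few times.
\item An occurrence of $\sigma_i$ that is a self-crossing of $A$ never interleaves with one that is a self-crossing of $B$.
\item So once $\sigma_i$ is shared among several such nearly split pieces, the number of interleaved partners of any occurrence is governed by how $\sigma_i$ is distributed among the pieces, not by $k$, and your $k/3$ count has no justification.
\end{itemize}
Moreover, even a crossing with large $\lk(L_0)$ is useless for your induction unless it is one of the two crossings of a square in some positive $m$-strand word for $K$. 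Otherwise $K'$ is not a positive braid knot, and the induction hypothesis does not apply. Nothing in your argument links the crossing chosen in Step~2 to the square of Step~1. You name this as the main obstacle but leave it open. The Polyak--Viro fallback is just a reformulation of the inequality to be proved.

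\textbf{The missing idea.} You do not need a crossing with large linking number at all. The paper takes the square $\beta=\alpha\sigma_i^2$ from Rudolph, inducts on $g$ for all $m$ simultaneously, and splits into two cases.
\begin{itemize}
\item If $\lk(\widehat{\alpha\sigma_i})\ge g/(3m)$, it inducts on $\widehat{\alpha}$, as you would.
\item If $\lk(\widehat{\alpha\sigma_i})<g/(3m)$, the smallness is exploited. The two components cross fewer than $2g/(3m)$ times. Changing fewer than $g/(3m)$ crossings of $\beta$ puts one component on top of the other. This turns $K$ into $K_1\#K_2$, with $K_j$ the closure of a positive $m_j$-braid and $m_1+m_2=m$.
\item This connected sum is the closure of a positive braid on $m-1$ strands, of genus at least $g-\frac{g}{3m}$. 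The induction hypothesis on $m-1$ strands gives $a_2(K_1\#K_2)\ge (g-\frac{g}{3m})^2/(6(m-1))$.
\item Applying the skein relation along the crossing changes, the $(2j-1)$-st and $2j$-th changes each cost at most $j-1$. Hence $a_2(K)\ge a_2(K_1\#K_2)-g^2/(18m^2)$, which suffices.
\end{itemize}
Trading a small linking number for one fewer strand is exactly what your argument lacks.
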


Similar lower bounds were recently obtained for the high degree coefficients of the HOMFLY polynomial of positive braid knots~\cite{I}. Our bound complements these, and also improves on Stoimenow's bound in~\cite{St} (Theorem~8.2) for $m$ large enough. The upper bound is sharp, as can be seen by considering the family of torus knot $T(2,2n+1)$ with genus $g=n$ and Casson invariant $a_2(T(2,2n+1))=\frac{n(n+1)}{2}$. The lower bound is almost sharp, as shows the following family of examples. Let $3_1^n$ denote the $n$-fold connected sum of positive trefoil knots. We may represent the knot $3_1^n$ as the closure of the braid $\sigma_1^3 \sigma_2^3 \cdots \sigma_n^3 \in {B_{n+1}^+}$; we observe $g(3_1^n)=n$. The lower bound $\frac{n^2}{6(n+1)}$ converges to $\frac{n}{6}$, as $n$ tends to infinity, a factor six off of the actual value of the Casson invariant: $a_2(3_1^n)=n$.
Nevertheless, it is conceivable that the constant $\frac{1}{48m}$ in the bound of \Cref{quasi} can be strengthened to a constant not depending on $m$, thus proving quasi-additivity of $u_\Delta$ on the set of all positive and negative braid knots.

We will prove both \Cref{casson} and \Cref{quasi} in the next section.

\section{Estimating the Casson invariant}

The proof of \Cref{casson} makes use of the well-known skein relation for the Casson invariant of knots~\cite{PV}, of which we state a braided version here.
Let $\alpha \in B_m^+$ be a positive braid whose closure is a knot, and let $i \leq m-1$ be any positive index. Set $K_-$, $K_+$, $L$ to be the closures of the braids $\alpha$, $\alpha \sigma_i^2$, $\alpha \sigma_i \in B_m^+$, respectively. Then 
$$a_2(K_+)=a_2(K_-)+\lk(L),$$
where $\lk(L)$ denotes the linking number of the two-component link $L$, endowed with the natural orientation induced by its defining braid $\alpha \sigma_i$.

\begin{proof}[Proof of \Cref{casson}]
Let $K$ be the closure of a positive braid $\beta \in B_m^+$ with $n$ crossings, with minimal Seifert genus $g=g(K)$. Bennequin's celebrated genus formula~\cite{B} implies
$$g(K)=\frac{1}{2}(n-m+1).$$  
We start by recapitulating that the second inequality, $a_2(K) \leq \frac{g(g+1)}{2}$, was already derived in~\cite{VB}. We prove the first inequality, $\frac{g^2}{6m} \leq a_2(K)$, for all $m$ simultaneously by induction on $g$, the case $g=0$ being trivial.
Suppose $g \geq 1$, so $K$ is a non-trivial knot. It is well-known that the positive braid $\beta \in B_m^+$ representing $K$ can be chosen to end by the square of a generator, see for example~\cite{R}: there exists $\alpha \in B_m^+$ and an index $i \leq m-1$ with $\beta=\alpha \sigma_i^2$. By the above skein relation, with $K=K_+$, we have $a_2(K)=a_2(K_+)=a_2(K_-)+\lk(L)$. We distinguish two cases:

\medskip
\textbf{Case 1.} $\lk(L) \geq \frac{g}{3m}$.

\noindent We use the induction hypothesis for the knot $K_-$, which satisfies $g(K_-)=g-1$, again by Bennequin's formula, and obtain:
$$a_2(K) \geq \frac{(g-1)^2}{6m}+\frac{g}{3m} %= \frac{(g-1)^2+2g}{4m}
\geq \frac{g^2}{6m}.$$

\medskip
\begin{figure}
\includegraphics{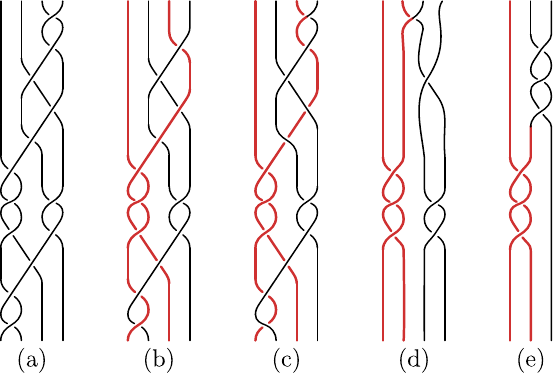}
\caption{Illustration of the proof of \Cref{casson}.
(a)~A positive braid $\beta = \alpha\sigma_3^2$ with $\alpha,\beta\in B_m^+$ for $m = 4$.
(b)~The link $L$ with $\lk(L) = 3$ obtained as closure of~$\alpha \sigma_3$.
(c),~(d)~Three crossing changes turn the closure $K$ of $\beta$ into a sum $K_1\# K_2$.
(e)~$K_1 \# K_2$ is the closure of a positive braid on $m-1 = 3$ strands.}
\label{fig:example}
\end{figure}
\textbf{Case 2.} $\lk(L) < \frac{g}{3m}$.

\noindent The link $L$ being the closure of the positive braid $\alpha \sigma_i \in B_m^+$, we conclude that the two components of $L$ can be split by changing $\lk(L) < \frac{g}{3m}$ crossings in the braid $\alpha \sigma_i$.
Indeed, by changing precisely one half of the crossings between the two components of $L$,
one component can be put on top of the other.
%, via a sequence of closures of positive braid links.
As a consequence, if we change the same crossings in the braid $\alpha \sigma_i^2$ representing the knot $K$, we obtain %a sequence of positive braid knots that terminates with the
a connected sum of two knots $K_1,K_2$, both of which are closures of positive braids $\beta_1 \in B_{m_1}^+$, $\beta_2 \in B_{m_2}^+$, with $m_1+m_2 = m$.
This is illustrated in \Cref{fig:example}.
Again, Bennequin's formula implies
$$g(K_1 \# K_2) \geq g-\frac{g}{3m},$$
since we replaced at most $\frac{g}{3m}$ positive crossings in the braid $\alpha \sigma_i^2$ to negative ones. Now comes the crucial observation: the connected sum $K_1 \# K_2$ can be represented by a positive braid in $B_{m-1}^+$, i.e.~by a positive braid with one strand less than the original one!
See again \Cref{fig:example} for an example.
%This is very simple; just think of the connected sum of two trefoil knots, represented by the braid $\sigma_1^3 \sigma_2^3 \in B_3^+$.
In the general case, we take the product of the braid $\beta_1$ and a version of the braid $\beta_2$ with the index of all generators shifted by $m_1-1$; this results in a positive braid with $m_1+m_2-1=m-1$ strands.

We use the induction hypothesis for the knot $K_1 \# K_2$ and obtain
$$%a_2(K) \geq
a_2(K_1 \# K_2) \geq \frac{(g-\frac{g}{3m})^2}{6(m-1)}.$$
It remains to bound $a_2(K)$ in terms of $a_2(K_1\# K_2)$.
Recall that there is a sequence of at most $\frac{g}{3m}$ positive-to-negative crossing changes
that transforms $K$ into $K_1 \# K_2$.
At each step, $a_2$ changes by the linking number of a certain two-component link~$L$, as explained above. In the first step, $L$ is positive, so $\lk(L) \geq 0$;
in the second step, $L$ has at most one negative crossing, so still $\lk(L) \geq 0$;
more generally, in the $(2k-1)$-st and $2k$-th step, $\lk(L) \geq 1 - k$.
This implies
\begin{align*}
a_2(K) & \geq a_2(K_1 \# K_2) - \Bigl(0 + 0 + 1 + 1 + \ldots + \Bigl\lfloor\frac{g}{6m}\Bigr\rfloor + \Bigl\lfloor\frac{g}{6m}\Bigr\rfloor\Bigr) \\
 & \geq a_2(K_1 \# K_2) - \Bigl\lfloor\frac{g}{6m}\Bigr\rfloor\Bigl(\Bigl\lfloor\frac{g}{6m}\Bigr\rfloor+1\Bigr)
   \geq a_2(K_1 \# K_2) - \frac{g^2}{18m^2}.
\end{align*}
We conclude 
\[
a_2(K) \geq \frac{(g-\frac{g}{3m})^2}{6(m-1)} - \frac{g^2}{18m^2} \geq \frac{g^2}{6m}.
\]
%The first inequality makes use of the monotonicity of the Casson invariant on closures of positive braids under the operation of changing positive crossings to negative ones.
The last inequality is a consequence of the inequality
\[
3m^2\Bigl(1-\frac{1}{3m}\Bigr)^2 \geq (m-1) + 3m(m-1),
\]
which is true for all positive~$m$.
%$$m\left(1-\frac{1}{2m}\right)^2 \geq m-1.
%\eqno\qed$$\renewcommand{\qed}{}
\end{proof}

\begin{proof}[Proof of \Cref{quasi}]
As explained in~\cite{NNU}, any crossing change in a diagram of a knot $K$ with $n$ crossings can be realised as a sequence of at most $n$ $\Delta$-moves. Therefore, the $\Delta$\nobreakdash-unknotting number is bounded above, as follows:
$$u_\Delta(K) \leq \frac{n^2}{2}.$$
Now let $K$ be the closure of a reduced positive braid $\beta$ of length~$n$ in $B_m^+$. Here reduced simply means that every positive generator $\sigma_i$ appears at least twice in $\beta$, which implies $n \geq 2(m-1)$. By invoking Bennequin's genus formula one more time, $2g(K)=n-m+1$, we conclude
$n=2g(K)+m-1 \leq 2g(K)+\frac{n}{2}$, thus $n \leq 4g(K)$, in turn
$u_\Delta(K) \leq 8g(K)^2$. 
The first inequality in \Cref{casson} then implies
$$u_\Delta(K) \leq 48m a_2(K).$$

Now let $K_1,K_2 \in \widehat{B_m^+} \cup \widehat{B_m^-}$. By the inequality $u_\Delta(K) \geq a_2(K)$, the additivity of the Casson invariant $a_2$, as well as its invariance under the mirror image of knots, we obtain the following chain of inequalities:
\begin{align*}
    u_\Delta(K_1\#K_2) \geq a_2(K_1\#K_2)=a_2(K_1)+a_2(K_2) \geq \frac{u_\Delta(K_1)}{48m}+\frac{u_\Delta(K_2)}{48m}.
\end{align*}

\end{proof}

We thus established quasi-additivity of $u_{\Delta}$ for positive and negative braids with a fixed number of strands.
As mentioned in the introduction, we do not know whether %the unknotting numbers $u$ and
$u_\Delta$ is quasi-additive for all knots.
Other questions that have been settled for the ordinary unknotting number~$u$
still remain open for $u_\Delta$.
For example, the authors are not aware whether $u_\Delta(K)=1$ implies that $K$ is prime, an implication known to be true for $u$~\cite{Sch}.

\bigskip
\noindent
Mathematisches Institut, Universit\"at Bern, Sidlerstrasse 5, 3012 Bern, Switzerland

\smallskip
\noindent
Department of Mathematics, ETH Z\"urich, R\"amistrasse 101, 8092 Z\"urich, Switzerland 

\bigskip
\noindent
\texttt{\href{mailto:sebastian.baader@unibe.ch}{sebastian.baader@unibe.ch}}

\smallskip
\noindent
\texttt{\href{mailto:lukas.lewark@math.ethz.ch}{lukas.lewark@math.ethz.ch}}


\begin{thebibliography}{10}

\bibitem{B}
     D.~Bennequin: \emph{Entrelacements et \'{e}quations de Pfaff}, Third Schnepfenried geometry conference, Vol.~1 (Schnepfenried, 1982), 87--161.
Ast\'{e}risque, 107--108.

\bibitem{BH}
     M.~Brittenham, S.~Hermiller: \emph{Unknotting number is not additive under connected sum}, arXiv:2506.24088.

%\bibitem{FW}
%     J.~Franks, R.~F.~Williams: \emph{Braids and the Jones polynomial}, Trans. Amer.
%Math. Soc.~303 (1987), no.~1, 97--108.

\bibitem{H}
     K.~Habiro: \emph{Claspers and finite type invariants of links},
Geom. Topol.~4 (2000), 1--83.

\bibitem{I}
     T.~Ito: \emph{A note on HOMFLY polynomial of positive braid links}, Internat. J.~Math. 33 (2022), no.~4.

%\bibitem{K}
%     T.~Kawamura: \emph{On unknotting numbers and four-dimensional clasp numbers of links}, Proc. Amer. Math. Soc.~130 (2002), no.~1, 243--252.

\bibitem{L}
     M.~Lackenby: \emph{The crossing number of composite knots}, J. Topol.~2 (2009), no.~4, 747--768.

\bibitem{MN}
     H.~Murakami, Y.~Nakanishi: \emph{On a certain move generating link-homology},
Math. Ann.~284 (1989), no.~1, 75--89.

\bibitem{NNU}
     K.~Nakamura, Y.~Nakanishi, Y.~Uchida: \emph{Delta-unknotting number for knots},
J.~Knot Theory Ramifications~7 (1998), no.~5, 639--650.

\bibitem{O}
     M.~Okada: \emph{Delta-unknotting operation and the second coefficient of the Conway polynomial}, J.~Math. Soc. Japan~42 (1990), no.~4, 713--717.

\bibitem{PV}
     M.~Polyak, O.~Viro: \emph{On the Casson knot invariant}, Knots in Hellas '98, Vol.~3 (Delphi), J.~Knot Theory Ramifications~10 (2001), no.~5, 711--738.

\bibitem{R}
     L.~Rudolph: \emph{Braided surfaces and Seifert ribbons for closed braids}, Comment.~Math.~Helv.~58, 1--37 (1983). 

\bibitem{Sch}
     M.~Scharlemann: \emph{Unknotting number one knots are prime},
Invent. Math.~82 (1985), no.~1, 37--55.

\bibitem{St}
     A.~Stoimenow: \emph{Positive knots, closed braids and the Jones polynomial},
Ann. Sc. Norm. Super. Pisa Cl. Sci. (5) 2 (2003), no.~2, 237--285.

\bibitem{VB}
     J.~M.~Van Buskirk: \emph{Positive knots have positive Conway polynomials}, Knot theory and manifolds (Vancouver, B.C., 1983), 146--159. Lecture Notes in Math., 1144
Springer-Verlag, Berlin, 1985.


\end{thebibliography}
\end{document}